\definecolor{chianti}{rgb}{0.6,0,0}
\definecolor{meretale}{rgb}{0,0,.6}
\definecolor{leaf}{rgb}{0,.35,0}
\newtheorem{theorem}{Theorem}[section]
\newtheorem{lemma}[theorem]{Lemma}
\theoremstyle{definition}
\newtheorem{remark}[theorem]{Remark}
\newtheorem{conjecture}[theorem]{Conjecture}
\numberwithin{equation}{theorem}
\def\id{\operatorname{id}}
\def\tr{{\operatorname{tr}}}
\def\GL{\operatorname{GL}}
\def\O{\operatorname{O}}
\def\SO{\operatorname{SO}}
\def\ge{\geqslant}
\def\le{\leqslant}
\def\tilde{\widetilde}
\def\to{\longrightarrow}
\def\mapsto{\longmapsto}
\def\Hom{\operatorname{Hom}}
\def\HH{\underline{\mathrm{Hom}}}
\def\fraka{\mathfrak{a}}
\def\frakm{\mathfrak{m}}
\def\frakp{\mathfrak{p}}
\def\AA{\mathbb{A}}
\def\NN{\mathbb{N}}
\def\QQ{\mathbb{Q}}
\def\ZZ{\mathbb{Z}}
\begin{document}
\title[{Invariant rings of the special orthogonal group have nonunimodal $h$-vectors}]{Invariant rings of the special orthogonal group\\ have nonunimodal $h$-vectors}

\author{Aldo Conca}
\address{Dipartimento di Matematica, Universit\`a di Genova, Dipartimento di Eccellenza 2023-2027, Via Dodecaneso 35, I-16146 Genova, Italy}
\email{conca@dima.unige.it}

\author{Anurag K. Singh}
\address{Department of Mathematics, University of Utah, 155 South 1400 East, Salt Lake City, UT~84112, USA}
\email{singh@math.utah.edu}

\author{Matteo Varbaro}
\address{Dipartimento di Matematica, Universit\`a di Genova, Dipartimento di Eccellenza 2023-2027, Via Dodecaneso 35, I-16146 Genova, Italy}
\email{varbaro@dima.unige.it}

\thanks{A.C. and M.V. are supported by PRIN~2020355B8Y ``Squarefree Gr\"obner degenerations, special varieties and related topics,'' by MIUR Excellence Department Project awarded to the Dept.~of Mathematics, Univ.~of Genova, CUP D33C23001110001 and by INdAM-GNSAGA; A.K.S. is supported by NSF grants DMS-2101671 and DMS-2349623; all authors were supported by NSF grant DMS-1928930 and by Alfred P. Sloan Foundation grant G-2021-16778, while in residence at SLMath/MSRI, Berkeley, during the Spring 2024 Commutative Algebra program. We are grateful to Mitsuyasu Hashimoto for valuable discussions, and to the referee for several useful~comments.}

\dedicatory{To Sudhir Ghorpade, in celebration of his sixtieth birthday.}

\begin{abstract}
For $K$ an infinite field of characteristic other than two, consider the action of the special orthogonal group~$\operatorname{SO}_t(K)$ on a polynomial ring via copies of the regular representation. When $K$ has characteristic zero, Boutot's theorem implies that the invariant ring has rational singularities; when $K$ has positive characteristic, the invariant ring is~$F$-regular, as proven by Hashimoto using good filtrations. We give a new proof of this, viewing the invariant ring for $\operatorname{SO}_t(K)$ as a cyclic cover of the invariant ring for the corresponding orthogonal group; this point of view has a number of useful consequences, for example it readily yields the~$a$-invariant and information on the Hilbert series. Indeed, we use this to show that the $h$-vector of the invariant ring for $\operatorname{SO}_t(K)$ need not be unimodal.
\end{abstract}
\maketitle

\section{Introduction}

Let $X$ be an $n\times n$ symmetric matrix of indeterminates over a field $K$, and let $I_{t+1}(X)$ denote the ideal of the polynomial ring $K[X]$ generated by the size $t+1$ minors of~$X$. For~$t$ a positive integer with $t+1\le n$, we refer to $K[X]/I_{t+1}(X)$ as a \emph{symmetric determinantal ring}. The ring~$K[X]/I_{t+1}(X)$ is a Cohen-Macaulay normal domain of dimension
\[
\binom{n+1}{2} - \binom{n+1-t}{2},
\]
as proven in~\cite{Kutz}. These rings have been studied extensively, in part because they arise as invariant rings for the natural action of the orthogonal group
\begin{equation}
\label{equation:orthogonal}
\O_t(K)\colonequals\{M\in\GL_t(K)\ |\ M^\tr M=\id\}
\end{equation}
as follows: for $Y$ a $t\times n$ matrix of indeterminates, $\O_t(K)$ acts~$K$-linearly on $K[Y]$ via
\[
M\colon Y\mapsto MY\qquad\text{ for }\ M\in\O_t(K).
\]
This is a right action of $\O_t(K)$ on the polynomial ring $K[Y]$, corresponding to a left action of $\O_t(K)$ on the affine space $\AA_K^{t\times n}$. Note that $Y^\tr Y\mapsto Y^\tr M^\tr MY=Y^\tr Y$ for $M\in\O_t(K)$, so the entries of $Y^\tr Y$ are invariant under the action; when the field $K$ is infinite of characteristic other than two, the invariant ring is precisely the $K$-algebra generated by the entries of $Y^\tr Y$, see~\cite[Theorem~5.6]{DeConcini-Procesi}, and is isomorphic to the symmetric determinantal ring $K[X]/I_{t+1}(X)$ via the entrywise map $X\mapsto Y^\tr Y$. We use this to identify the rings~$K[X]/I_{t+1}(X)$ and $K[Y^\tr Y]$.

By \cite{Goto1, Goto2}, the ring~$R\colonequals K[Y^\tr Y]$ has class group $\ZZ/2$, and is Gorenstein precisely when $n\equiv t+1\mod 2$. Taking $\frakp$ to be a prime ideal that serves as a generator for the class group, it follows that the symbolic power $\frakp^{(2)}$ is isomorphic to $R$. We choose an explicit isomorphism $\frakp^{(2)}\cong R$ so that the cyclic cover of $R$ with respect to $\frakp$ is precisely the invariant ring for the action of the special orthogonal group $\SO_t(K)$. This gives a straightforward approach towards studying the invariant ring~$K[Y]^{\SO_t(K)}$, for example towards determining its $a$-invariant and information regarding the Hilbert series.

When $K$ is an infinite field of characteristic two, the groups $\O_t(K)$ and $\SO_t(K)$ coincide when taking $\O_t(K)$ to be the group as defined in~\eqref{equation:orthogonal}; the invariant ring in this case is
\[
K[Y^\tr Y,\ \sum_{i=1}^t y_{ij}\ |\ 1\le j\le n],
\]
see~\cite[Proposition~17]{Richman}, and a presentation is provided by~\cite[Proposition~23]{Richman}. The reader is warned that there are varying definitions used for the orthogonal group in characteristic two, see for example~\cite[page~10]{ag:4}.

Section~\ref{section:cover} includes some generalities on cyclic covers; these are used in Section~\ref{section:a:invariant} where we compute the $a$-invariant of $K[Y]^{\SO_t(K)}$ and also record a proof that this ring is $F$-regular. Section~\ref{section:h:vector} is devoted to the $h$-vector of $K[Y]^{\SO_t(K)}$, i.e., the coefficients of the numerator of its Hilbert series: the key result here is that this invariant ring is a semistandard graded Gorenstein normal domain, for which the $h$-vector need not be unimodal; the context for this is discussed as well in Section~\ref{section:h:vector}.

\section{Cyclic covers and \texorpdfstring{$F$}{F}-regularity}
\label{section:cover}

Let $R$ be a normal domain. By a \emph{divisorial ideal} of $R$, we mean a nonzero intersection of fractional principal ideals. Let $\fraka$ be a divisorial ideal that has finite order $m$ when viewed as an element of the divisor class group of $R$. Then $\fraka^{(m)}=\alpha R$, for an element $\alpha$ in the fraction field of $R$. Set
\begin{equation}
\label{equation:t:root}
T\colonequals 1/\alpha^{1/m},
\end{equation}
which is an element in an algebraic closure of the fraction field of $R$; the choice of $\alpha$ or the~$m$-th root is not unique. The \emph{cyclic cover} of $R$ with respect to $\fraka$ is the ring 
\[
\tilde{R}\colonequals R[\fraka T, \ \fraka^{(2)}T^2, \ \fraka^{(3)}T^3,\ \dots],
\]
viewed as a subring of $R[T]$. Since
\[
\fraka^{(m+k)}T^{m+k}\ =\ \alpha \fraka^{(k)}T^{m+k}\ =\ \fraka^{(k)}T^k
\]
for each $k\ge 0$, the ring $\tilde{R}$ is a finitely generated reflexive $R$-module; specifically, one has an $R$-module isomorphism
\[
\tilde{R}\ \cong\ R\oplus \fraka\oplus \fraka^{(2)}\oplus\cdots\oplus \fraka^{(m-1)}.
\]

When the ring $R$ is $\NN$-graded and $\fraka$ is a homogeneous divisorial ideal of finite order~$m$, there exists a homogeneous element $\alpha$ with $\fraka^{(m)}=\alpha R$, and the $\NN$-grading on $R$ extends to a~$\QQ$-grading on $\tilde{R}$ obtained by setting
\[
\deg T \colonequals -(\deg\alpha)/m.
\]
It turns out that this is a $\QQ_{\ge0}$-grading on $\tilde{R}$, and that ${[\tilde{R}]}_0=R_0$, see~\cite[Proposition~4.2]{Singh:cyclic}.

Suppose that the characteristic of $R$ is zero or relatively prime to $m$, and that $\frakp$ is a height one prime ideal of $R$. Then the ideal $\fraka R_\frakp$ is principal; take $r$ to be a generator. Since~$r^m=\alpha u$, for $u$ a unit in $R_\frakp$, it follows that
\[
\tilde{R}_\frakp\ =\ R_\frakp[rT]\ \cong\ R_\frakp[u^{1/m}],
\]
so $R_\frakp\to\tilde{R}_\frakp$ is \'etale. In particular, under this assumption on the characteristic, the ring~$\tilde{R}_\frakp$ is regular for each height one prime of $R$; since each $\fraka^{(k)}$ is reflexive, the ring~$\tilde{R}$ also satisfies the Serre condition $S_2$, and is hence a normal domain. By~\cite[Theorem~2.7]{watanabe:dim2}, $F$-regularity is preserved under finite extensions that are \'etale at height one primes, so one has:

\begin{theorem}[Watanabe]
\label{theorem:f:regular}
Let $R$ be an $\NN$-graded ring that is finitely generated over a field~$R_0$ of characteristic $p>0$, and let $\tilde{R}$ be the cyclic cover of $R$ with respect to a homogeneous ideal of finite order relatively prime to $p$. Then, if $R$ is $F$-regular, so is $\tilde{R}$.
\end{theorem}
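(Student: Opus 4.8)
The plan is to assemble the ingredients established in the paragraph immediately preceding the statement and then invoke the cited result of Watanabe. First I would record that $\tilde R$ is a module-finite extension of $R$: the $R$-module isomorphism $\tilde R\cong R\oplus\fraka\oplus\fraka^{(2)}\oplus\cdots\oplus\fraka^{(m-1)}$ displays $\tilde R$ as a finitely generated $R$-module, and since each summand is a nonzero ideal the inclusion $R\hookrightarrow\tilde R$ is an extension of domains. Consequently $\tilde R$ is finitely generated over the field $R_0$; using the $\QQ_{\ge0}$-grading on $\tilde R$ with ${[\tilde R]}_0=R_0$ furnished by \cite[Proposition~4.2]{Singh:cyclic} and clearing denominators in the degrees, one may regrade $\tilde R$ as an $\NN$-graded ring finitely generated over $R_0$, so that it lies in exactly the setting in which $F$-regularity is being considered in the statement.

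Next I would verify that $R\hookrightarrow\tilde R$ is \'etale at every height one prime of $R$ and that $\tilde R$ is a normal domain; this is precisely the computation already carried out above. For $\frakp$ a height one prime of $R$ one writes $\fraka R_\frakp=rR_\frakp$, and from $r^m=\alpha u$ with $u$ a unit of $R_\frakp$ one gets $\tilde R_\frakp=R_\frakp[rT]\cong R_\frakp[u^{1/m}]$, which is \'etale over $R_\frakp$ because $\operatorname{char}R$ is prime to $m$. Reflexivity of each $\fraka^{(k)}$ gives that $\tilde R$ satisfies the Serre condition $S_2$, and being moreover regular in codimension one, it is a normal domain.

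The theorem then follows immediately from \cite[Theorem~2.7]{watanabe:dim2}: the ring $\tilde R$ is a module-finite extension of the normal domain $R$, it is itself a normal domain (and both rings are $F$-finite, being finitely generated over a field of characteristic $p$), and the extension is \'etale at all height one primes of $R$; hence the hypothesis that $R$ is $F$-regular forces $\tilde R$ to be $F$-regular.

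I do not anticipate a genuine obstacle here: the substantive input — module-finiteness, normality, and \'etaleness in codimension one — has all been supplied already, so the argument is in essence a citation. The only point requiring mild care is to match the hypotheses of the cited theorem with the present graded situation; this can be handled either by passing to the homogeneous localization at the irrelevant maximal ideal, where $F$-regularity may be tested, or by using that for rings essentially of finite type over an $F$-finite field $F$-regularity localizes, so that the \'etale-in-codimension-one ascent of $F$-regularity applies verbatim.
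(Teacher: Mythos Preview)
Your proposal is correct and matches the paper's own treatment: the paper does not supply a separate proof but simply records, immediately before the statement, that the preceding paragraph establishes module-finiteness, normality of $\tilde R$, and \'etaleness of $R\hookrightarrow\tilde R$ at height one primes, and then cites \cite[Theorem~2.7]{watanabe:dim2}. Your write-up is just an explicit unpacking of that same sentence.
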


The restriction on the characteristic is removed in~\cite[Theorem~C]{Carvajal}. For the theory of~$F$-regularity in the graded setting, we point the reader towards~\cite{HH:JAG}. When~$R$ is an~$\NN$-graded ring finitely generated over a field $R_0$ of positive characteristic, the notions of weak~$F$-regularity, $F$-regularity, and strong~$F$-regularity all coincide as proven in~\cite{Lyubeznik:Smith}, so we do not make a distinction between these in the present paper.

The $F$-regularity of generic determinantal rings and of Pl\"ucker coordinate rings of Grassmannians is proven as~\cite[Theorem~7.14]{HH:JAG}; the proof therein is readily adapted to symmetric determinantal rings, as we show next. For a different approach, see~\cite[\S4.1]{Lorincz}.

\begin{theorem}
\label{theorem:symmetric}
Let $X$ be an $n\times n$ symmetric matrix of indeterminates over a field $K$ of positive prime characteristic. Then the ring~$K[X]/I_{t+1}(X)$ is $F$-regular.
\end{theorem}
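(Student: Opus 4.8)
The plan is to imitate the proof of \cite[Theorem~7.14]{HH:JAG} for generic determinantal rings, carrying out the analogous induction for the symmetric case. The key structural fact to exploit is the same as in the generic setting: inverting a suitable entry of the symmetric matrix $X$ produces a localization in which the determinantal ring becomes, after a change of coordinates, a polynomial extension of a symmetric determinantal ring for a smaller matrix and a smaller size of minors. Concretely, writing $X=(x_{ij})$, after a symmetric change of variables one may assume the top-left entry $x_{11}$ is inverted; then elementary symmetric row-and-column operations clear out the first row and column, and $(K[X]/I_{t+1}(X))[x_{11}^{-1}]$ is identified with a polynomial ring over $K[X']/I_t(X')$, where $X'$ is an $(n-1)\times(n-1)$ symmetric matrix of indeterminates. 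This is the symmetric analogue of the standard ``generic point'' localization for determinantal rings, and it is already implicit in the normality/Cohen--Macaulayness arguments of \cite{Kutz}.

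With that localization in hand, I would set up a double induction on $n$ and on $t$. The base cases are the situations where $I_{t+1}(X)$ is zero (i.e. $t+1>n$), so that $K[X]/I_{t+1}(X)=K[X]$ is a polynomial ring and hence $F$-regular; and the case $t=0$, where $I_1(X)=(x_{ij})$ and the quotient is $K$, trivially $F$-regular. For the inductive step one uses the criterion of \cite[Theorem~7.14]{HH:JAG} (or equivalently the relevant result on $F$-regularity of $\NN$-graded rings from \cite{HH:JAG}) that allows one to deduce strong $F$-regularity of $R=K[X]/I_{t+1}(X)$ from: (i) $R$ is a Cohen--Macaulay normal domain, which holds by \cite{Kutz}; (ii) after inverting one homogeneous element $f$ (here $f=x_{11}$) the ring $R_f$ is $F$-regular, which holds by the localization described above together with the inductive hypothesis applied to the smaller symmetric determinantal ring $K[X']/I_t(X')$ and the fact that $F$-regularity ascends to polynomial extensions; and (iii) a suitable Frobenius-splitting or ``$F$-pure with a splitting compatible through $f$'' statement, which in the $\NN$-graded Cohen--Macaulay setting one gets from checking that $R$ has an $a$-invariant and a socle-generator witness so that some power of Frobenius splits $R$ taking $f$ into the maximal ideal appropriately. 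In practice the cleanest route is the argument of \cite{HH:JAG}: a Cohen--Macaulay $\NN$-graded normal domain that is $F$-regular on the complement of a single homogeneous hypersurface $V(f)$, and for which $f$ is not in any minimal prime of the conductor-type test ideal, is $F$-regular; one then just has to produce the test element and verify $f$ times a test element multiplies $e$-th Frobenius powers of ideals back inside, which is where one invokes the Cohen--Macaulay property and a standard colon-capturing estimate.

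The main obstacle I expect is not the abstract $F$-regularity machinery but the explicit verification that inverting $x_{11}$ (or another well-chosen entry) really does present $(K[X]/I_{t+1}(X))[x_{11}^{-1}]$ as a polynomial ring over $K[X']/I_t(X')$ in the symmetric setting: one must check that the symmetric row/column operations used to clear the first row and column are realized by automorphisms of the ambient polynomial ring $K[X][x_{11}^{-1}]$ preserving symmetry, and that under these the ideal of $(t+1)$-minors of $X$ corresponds exactly to the ideal of $t$-minors of the resulting $(n-1)\times(n-1)$ symmetric block; this is a Schur-complement computation, valid because $\mathrm{char}\,K\neq 2$ is not even needed here, only that $x_{11}$ is a unit, but the bookkeeping with symmetry has to be done carefully. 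The secondary point requiring care is the passage from ``$F$-regular after inverting one element'' to ``$F$-regular'': one needs $x_{11}$ to be a nonzerodivisor lying in height at least one in the right sense, and one needs the Cohen--Macaulay hypothesis from \cite{Kutz} to run the colon-capturing argument; both are available, so this should go through exactly as in the generic determinantal case. I would also remark that once this is in place, the $a$-invariant and Gorenstein criterion of \cite{Goto1, Goto2} are consistent with and reinforce the conclusion, though they are not needed for the proof itself.
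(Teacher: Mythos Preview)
Your overall architecture---induction on $t$ via the localization isomorphism $(K[X]/I_{t+1}(X))[x_{11}^{-1}]\cong (K[X']/I_t(X'))[\text{variables}]^{\pm}$---is exactly the paper's strategy, and that localization is indeed Lemma~\ref{lemma:sym:invert}(1), so what you flag as the ``main obstacle'' is not one. The genuine gap is your step~(iii).

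You assert that Cohen--Macaulay normality plus $F$-regularity of $R_{x_{11}}$ plus a ``colon-capturing estimate'' suffices. It does not: colon capturing controls tight closure of parameter ideals, but to pass from ``$F$-regular after inverting one element'' to ``$F$-regular'' you need an actual $R$-linear splitting $F^e_*R\to R$ sending $x_{11}^{1/p^e}\mapsto 1$ for some $e$. Producing that splitting is the whole point, and you never say how. The paper supplies two concrete ingredients you omit: first, it reduces to the Gorenstein case by enlarging $X$ to an $(n{+}1)\times(n{+}1)$ symmetric matrix when $n\not\equiv t{+}1\bmod 2$ and using that $F$-regularity descends along the pure inclusion; second, it establishes $F$-purity from the fact that the diagonal initial ideal of $I_{t+1}(X)$ is squarefree \cite{Conca:GBsym, Conca:Herzog:ladder}. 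With Gorenstein, $F$-pure, and $a(R)<0$ in hand, \cite[Corollary~7.13]{HH:JAG} applies. Your closing remark that ``the $a$-invariant and Gorenstein criterion \dots\ are not needed for the proof itself'' is therefore exactly backwards: they are the missing hypotheses that make your step~(iii) go through, and this is already how the proof of \cite[Theorem~7.14]{HH:JAG} that you say you are imitating proceeds.

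A secondary point: the paper's version of the criterion requires $F$-regularity on the entire punctured spectrum, i.e.\ $R_{x_{ij}}$ $F$-regular for \emph{every} entry. Inverting $x_{11}$ handles only the diagonal entries by symmetry; for the off-diagonal entry $x_{12}$ the paper uses a second localization at $\Delta=x_{11}x_{22}-x_{12}^2$ (Lemma~\ref{lemma:sym:invert}(2)), since $x_{12}\notin\frakp$ forces $x_{11}\notin\frakp$ or $\Delta\notin\frakp$. If instead you intend the single-element criterion (splitting $x_{11}^{1/p^e}\mapsto 1$), this extra localization is unnecessary, but then you are squarely back to needing the Gorenstein/$F$-pure/$a$-invariant input above.
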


\begin{proof}
If $n\equiv t+1\mod 2$, then $K[X]/I_{t+1}(X)$ is Gorenstein; otherwise, enlarge $X$ to a symmetric matrix $\tilde{X}$ of size $n+1$, in which case the ring $K[\tilde{X}]/I_{t+1}(\tilde{X})$ is Gorenstein, and contains~$K[X]/I_{t+1}(X)$ as a pure subring. Since $F$-regularity is inherited by pure subrings, it suffices to prove the desired result when $R\colonequals K[X]/I_{t+1}(X)$ is Gorenstein.

The $a$-invariant of $R$ is computed in~\cite{Barile:a:inv} and~\cite{Conca:symmetric:ladders}, and recorded in the following section; in particular, $a(R)<0$. We next claim that $R$ is $F$-injective, equivalently $F$-pure, since the notions coincide in the Gorenstein case. This follows by~\cite[Theorem~2.1]{Conca:Herzog:ladder} in combination with the main result of \cite{Conca:GBsym} asserting that the ``diagonal'' initial ideal of~$I_{t+1}(X)$ is square-free and defines a Cohen-Macaulay ring. 

The $F$-regularity of $R$ now follows from~\cite[Corollary~7.13]{HH:JAG}, once we verify that the localization $R_{x_{ij}}$ is $F$-regular for each $x_{ij}$. Using the lemma below and induction on $t$, the localizations $R_{x_{11}}$ and $R_\Delta$ are $F$-regular; but then $R_\frakp$ is $F$-regular if $\frakp$ is a prime ideal such that $x_{11}\notin\frakp$ or $\Delta\notin\frakp$. It follows that $R_\frakp$ is also $F$-regular if $x_{12}\notin\frakp$. Since we have accounted for the diagonal variable $x_{11}$ and the off-diagonal variable $x_{12}$, the symmetry implies that $R_{x_{ij}}$ is $F$-regular for each $x_{ij}$.
\end{proof}

\begin{lemma}
\label{lemma:sym:invert}
Let $R\colonequals K[X]/I_{t+1}(X)$, where $X$ is a symmetric $n\times n$ matrix of indeterminates. Then:
\begin{enumerate}[\ \rm(1)]
\item The ring $R_{x_{11}}$ is isomorphic to a localization of a polynomial ring over $K[X']/I_t(X')$, where $X'$ is a symmetric $(n-1)\times(n-1)$ matrix of indeterminates.
\item For $\Delta\colonequals x_{11}x_{22}-x_{12}^2$, the ring $R_\Delta$ is isomorphic to a localization of a polynomial ring over $K[X']/I_{t-1}(X')$, for $X'$ a symmetric $(n-2)\times(n-2)$ matrix of indeterminates.
\end{enumerate}
\end{lemma}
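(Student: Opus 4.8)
The plan is to prove both statements by symmetric Gaussian elimination (the Schur complement trick), carried out upstairs in the relevant localization of the polynomial ring $K[X]$ and only afterwards passing to the determinantal quotient. Two elementary facts will be used repeatedly: multiplying a matrix on either side by an invertible matrix leaves each of its ideals of $j\times j$ minors unchanged, and for a block-diagonal matrix $\operatorname{diag}(\id_k,M)$ with identity block of size $k$ one has $I_j(\operatorname{diag}(\id_k,M)) = I_{j-k}(M)$ for all $j\ge k$.

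For (1), write $X = \left(\begin{smallmatrix} x_{11} & b^{\tr} \\ b & X_1\end{smallmatrix}\right)$, where $b\colonequals(x_{21},\dots,x_{n1})^{\tr}$ and $X_1$ is the symmetric $(n-1)\times(n-1)$ matrix obtained by deleting the first row and column. Over $K[X]_{x_{11}}$ the matrix $P\colonequals\left(\begin{smallmatrix}1 & -b^{\tr}/x_{11}\\ 0 & \id\end{smallmatrix}\right)$ is invertible, and a direct computation gives $P^{\tr}XP = \operatorname{diag}(x_{11},\,X')$ with $X'\colonequals X_1 - bb^{\tr}/x_{11}$ symmetric of size $n-1$. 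Since $x_{11}$ is a unit in $K[X]_{x_{11}}$, the two facts above yield $I_{t+1}(X)\,K[X]_{x_{11}} = I_t(X')\,K[X]_{x_{11}}$. It remains to observe that the substitution fixing $x_{11},x_{21},\dots,x_{n1}$ and sending $x_{ij}$ (for $2\le i\le j\le n$) to the $(i,j)$ entry $x_{ij}-x_{i1}x_{j1}/x_{11}$ of $X'$ is a triangular change of coordinates; hence it identifies $K[X]_{x_{11}}$ with the localization at $x_{11}$ of a polynomial ring over $K$ in the variables $x_{11},x_{21},\dots,x_{n1}$ together with the entries of a genuine symmetric matrix of indeterminates $X'$ of size $n-1$. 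Passing to the quotient by $I_t(X')$ exhibits $R_{x_{11}}$ as a localization of a polynomial ring over $K[X']/I_t(X')$.

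Part (2) is the same argument with the $1\times 1$ corner replaced by the $2\times 2$ symmetric block $A\colonequals\left(\begin{smallmatrix}x_{11} & x_{12}\\ x_{12} & x_{22}\end{smallmatrix}\right)$, whose determinant is $\Delta$. Write $X = \left(\begin{smallmatrix}A & B^{\tr}\\ B & X_2\end{smallmatrix}\right)$, where $B$ is the $(n-2)\times 2$ matrix of the variables $x_{i1},x_{i2}$ for $3\le i\le n$ and $X_2$ is the symmetric $(n-2)\times(n-2)$ lower-right block. Since $\det A = \Delta$ is a unit in $K[X]_\Delta$, the matrix $A^{-1}$ has entries there, so $P\colonequals\left(\begin{smallmatrix}\id & -A^{-1}B^{\tr}\\ 0 & \id\end{smallmatrix}\right)\in\GL_n(K[X]_\Delta)$, and one computes $P^{\tr}XP = \operatorname{diag}(A,\,X')$ with $X'\colonequals X_2 - BA^{-1}B^{\tr}$ symmetric of size $n-2$; hence $I_{t+1}(X)\,K[X]_\Delta = I_{t-1}(X')\,K[X]_\Delta$. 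The entries of $X'$ differ from the fresh variables $x_{ij}$ ($3\le i\le j\le n$) by rational functions in the entries of $A$ and $B$ with a power of $\Delta$ as denominator, so the corresponding substitution is again an invertible (triangular) change of coordinates, exhibiting $K[X]_\Delta$ as a localization of a polynomial ring over $K[X']$; quotienting by $I_{t-1}(X')$ gives the claim.

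The difficulty here is purely bookkeeping rather than conceptual: one must work with congruences $P^{\tr}XP$, so that symmetry is preserved and $X'$ is again a symmetric matrix of indeterminates, keep track of which element is inverted, and confirm that the indicated substitution of variables is genuinely invertible over the localized polynomial ring. The whole argument is characteristic-free.
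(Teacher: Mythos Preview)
Your proof is correct and is exactly the symmetric Schur-complement argument that the paper invokes by citing \cite[Lemma~1.1]{Jozefiak}; the paper does not supply its own proof. Your write-up in fact spells out the details (the congruence $P^{\tr}XP$, the triangular change of variables, and the reduction $I_{t+1}(\operatorname{diag}(\id_k,M))=I_{t+1-k}(M)$) that the reference leaves to the reader.
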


For a proof, see~\cite[Lemma~1.1]{Jozefiak}; the argument also appears implicitly in \cite{Micali:Villamayor}.

\section{The \texorpdfstring{$a$}{a}-invariant}
\label{section:a:invariant}

Let $Y$ be a $t\times n$ matrix of indeterminates over a field $K$. In this section, we work with the grading on the subring $R\colonequals K[Y^\tr Y]$ that is induced by the standard grading on the polynomial ring $K[Y]$. Note that under the identification of $K[X]/I_{t+1}(X)$ with $K[Y^\tr Y]$, this corresponds to taking $\deg x_{ij}=2$ for each $i,j$. With this grading, \cite[Theorem~4.4]{Barile:a:inv} or~\cite[Theorem~2.4]{Conca:symmetric:ladders} imply that the $a$-invariant of $R$ is
\[
a(R)\ =\ 
\begin{cases}
-t(n+1) & \text{ if } n\equiv t\mod 2,\\
-tn & \text{ if } n\not\equiv t\mod 2;
\end{cases}
\]
more generally, the graded canonical module of $R$ is
\[
\omega_R\ =\ 
\begin{cases}
\frakp(-tn+t) & \text{ if } n\equiv t\mod 2,\\
R(-tn) & \text{ if } n\not\equiv t\mod 2,
\end{cases}
\]
where $\frakp$ is the ideal of $K[Y^\tr Y]$ generated by the maximal minors of the first $t$ rows of $Y^\tr Y$, i.e., by the maximal minors of the product matrix
\[
\begin{pmatrix}
y_{11} & y_{21} & \cdots & y_{t1}\\
y_{12} & y_{22} & \cdots & y_{t2}\\
\vdots & \vdots & \vdots & \vdots\\
y_{1t} & y_{2t} & \cdots & y_{tt}
\end{pmatrix}
\begin{pmatrix}
y_{11} & y_{12} & y_{13} & \cdots & \cdots & y_{1n}\\
y_{21} & y_{22} & y_{23} & \cdots & \cdots & y_{2n}\\
\vdots & \vdots & \vdots & \vdots & \vdots & \vdots\\
y_{t1} & y_{t2} & y_{t3} & \cdots & \cdots & y_{tn}
\end{pmatrix}.
\]
Using the identification of $K[X]/I_{t+1}(X)$ with $K[Y^\tr Y]$, the ideal $\frakp$ is prime of height one by~\cite[Theorem~1]{Kutz}, and generates the class group of~$R$ by~\cite{Goto1}. The symbolic power~$\frakp^{(2)}$ is the principal ideal of $R$ generated by the determinant of the first $t$ columns of the product matrix displayed above, i.e., $\frakp^{(2)}$ is generated by the square of
\[
\Delta\colonequals\det\begin{pmatrix}
y_{11} & y_{21} & \cdots & y_{t1}\\
y_{12} & y_{22} & \cdots & y_{t2}\\
\vdots & \vdots & \vdots & \vdots\\
y_{1t} & y_{2t} & \cdots & y_{tt}
\end{pmatrix}.
\]
Choosing a unit as in~\eqref{equation:t:root}, set
\[
T\colonequals 1/\Delta.
\]
The generators of $\frakp T$ are then identified with the maximal minors of the matrix $Y$, so that the cyclic cover $\tilde{R}$ of $R$ with respect to $\frakp$ is the subring of the polynomial ring $K[Y]$ generated by the entries of the product matrix $Y^\tr Y$ along with the maximal minors of $Y$. It is clear that these generators are fixed under the action of the special orthogonal group
\[
M\colon Y\mapsto MY\qquad\text{ for }\ M\in\SO_t(K).
\]
When the field $K$ is infinite of characteristic other than two, the invariant ring is precisely the $K$-algebra generated by these elements,~\cite[Theorem~5.6]{DeConcini-Procesi}.

We determine the graded canonical module of $\tilde{R}$; while the semisimplicity of $\SO_t(K)$ may be used to verify that $\tilde{R}$ is Gorenstein, \cite[page~123]{Hochster:Roberts}, our goal is to additionally obtain the~$a$-invariant of $\tilde{R}$. Since $\deg T=-t$, one has
\[
\tilde{R}\ =\ R\oplus\frakp(t).
\]
Let $\frakm$ denote the homogeneous maximal ideal of $R$. For an $\NN$-graded $R$-module $M$, we use $\HH(M,R/\frakm)$ to denote its graded dual as in \cite[page~184]{Goto:Watanabe}. Setting $d\colonequals\dim R$, the graded canonical module of $\tilde{R}$ may be computed as
\[
\omega_{\tilde{R}}\ =\ \HH\big(H_\frakm^d(\tilde{R}),\ R/\frakm\big)
\ =\ \HH\big(H_\frakm^d(R),\ R/\frakm\big) \oplus \HH\big(H_\frakm^d(\frakp(t)),\ R/\frakm\big).
\]
The first term in this direct sum is $\omega_R$, while the second is
\begin{align*}
\HH\big(H_\frakm^d(\frakp(t)),\ R/\frakm\big)\
& =\ \HH\big(H_\frakm^d(\omega_R)\otimes_R \omega_R^{(-1)}\otimes_R\frakp(t),\ R/\frakm\big)\\
& =\ \Hom_R\big(\omega_R^{(-1)}\otimes_R\frakp(t),\ \HH\big(H_\frakm^d(\omega_R),\ R/\frakm\big)\big)\\
& =\ \Hom_R\big(\omega_R^{(-1)}\otimes_R\frakp(t),\ R\big)\\
& =\ \big(\omega_R\otimes_R\frakp^{(-1)}(-t)\big)^{**},
\end{align*}
where $(-)^{**}$ is the reflexive hull. Since $\frakp^{(2)}=R(-2t)$, one has $\frakp^{(-1)}=\frakp(2t)$, so
\[
\big(\omega_R\otimes_R\frakp^{(-1)}(-t)\big)^{**}\ =\ 
\begin{cases}
R(-tn) & \text{ if } n\equiv t\mod 2,\\
\frakp(-tn+t) & \text{ if } n\not\equiv t\mod 2.
\end{cases}
\]
Putting it all together, one gets
\[
\omega_{\tilde{R}}\ =\ \begin{cases}
\frakp(-tn+t)\oplus R(-tn) & \text{ if } n\equiv t\mod 2,\\
R(-tn)\oplus\frakp(-tn+t) & \text{ if } n\not\equiv t\mod 2,
\end{cases}
\]
so that
\[
\omega_{\tilde{R}}\ =\ \tilde{R}(-tn),
\]
i.e., $\tilde{R}$ is Gorenstein with $a(\tilde{R})=-tn$. To summarize what we have at this stage:

\begin{theorem}
\label{theorem:a:inv}
Let $Y$ be a $t\times n$ matrix of indeterminates over a field $K$ of characteristic other than two. Let~$\tilde{R}$ denote the $K$-subalgebra of $K[Y]$ generated by the entries of the product matrix $Y^\tr Y$ along with the maximal minors of $Y$. Then $\tilde{R}$ is a Gorenstein normal domain. When $K$ has characteristic zero, the ring $\tilde{R}$ has rational singularities; when $K$ has positive characteristic, $\tilde{R}$ is $F$-regular.

With the $\NN$-grading on $\tilde{R}$ inherited from the standard grading on $K[Y]$, one has
\[
a(\tilde{R})\ =\ -tn.
\]
\end{theorem}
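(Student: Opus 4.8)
The strategy is to recognise $\tilde R$ as the cyclic cover, in the sense of Section~\ref{section:cover}, of the symmetric determinantal ring $R\colonequals K[Y^\tr Y]\cong K[X]/I_{t+1}(X)$ with respect to the divisorial ideal $\frakp$ generated by the size~$t$ minors of the first $t$ rows of $Y^\tr Y$, and then to deduce each assertion from the corresponding property of $R$. The preliminary bookkeeping is: $R$ is a Cohen--Macaulay normal domain by \cite{Kutz}; $\frakp$ is a height one prime generating the class group $\ZZ/2$ by \cite{Goto1}; and $\frakp^{(2)}$ is the principal ideal $\Delta^2R$, where $\Delta$ is the determinant of the first $t$ columns of $Y$. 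With $T\colonequals 1/\Delta$ one has $\deg T=-t$, and multiplicativity of the determinant shows that every size~$t$ minor of the first $t$ rows of $Y^\tr Y$ equals $\Delta$ times the corresponding maximal minor of $Y$; hence the generators of $\frakp T$ are exactly the maximal minors of $Y$, so the cyclic cover $\tilde R=R[\frakp T]\subseteq K[Y]$ coincides with the algebra in the statement, and as a graded $R$-module $\tilde R=R\oplus\frakp(t)$.

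Normality is then formal. As a subring of the domain $K[Y]$, $\tilde R$ is a domain; since $\operatorname{char}K$ is $0$ or odd, the order $2$ of $\frakp$ is invertible in $R$, so by the discussion preceding Theorem~\ref{theorem:f:regular} the map $R\to\tilde R$ is \'etale at every height one prime of $R$. Being a finite direct sum of divisorial ideals, $\tilde R$ is reflexive over $R$, hence satisfies $S_2$; combined with the $R_1$ condition coming from \'etaleness in codimension one and the normality of $R$, this gives that $\tilde R$ is a normal domain.

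For the singularities I would split into the two characteristics. When $\operatorname{char}K=p>0$ (necessarily odd), $R$ is $F$-regular by Theorem~\ref{theorem:symmetric}, and $\tilde R$ is its cyclic cover with respect to an ideal of order $2$, which is relatively prime to $p$; Watanabe's Theorem~\ref{theorem:f:regular} then gives that $\tilde R$ is $F$-regular. When $\operatorname{char}K=0$ the field $K$ is infinite, so $\tilde R=K[Y]^{\SO_t(K)}$ by \cite[Theorem~5.6]{DeConcini-Procesi}; since $\SO_t(K)$ is linearly reductive in characteristic zero, $\tilde R$ is a direct summand of the polynomial ring $K[Y]$, and Boutot's theorem yields that $\tilde R$ has rational singularities. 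In either case $\tilde R$ is Cohen--Macaulay, and since $\tilde R=R\oplus\frakp(t)$ splits as an $R$-module, $\frakp$ is a maximal Cohen--Macaulay $R$-module.

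It remains to compute the graded canonical module and the $a$-invariant. As $R$ is Cohen--Macaulay with canonical module $\omega_R$ and $R\subseteq\tilde R$ is module-finite with $\tilde R$ Cohen--Macaulay of the same dimension, one has $\omega_{\tilde R}=\Hom_R(\tilde R,\omega_R)=\omega_R\oplus\Hom_R(\frakp,\omega_R)(-t)$. Since $\frakp$ and $\omega_R$ are divisorial, $\Hom_R(\frakp,\omega_R)$ is the divisorial ideal of class $[\omega_R]-[\frakp]$, which one evaluates using $\frakp^{(2)}=R(-2t)$ (equivalently $\frakp^{(-1)}=\frakp(2t)$) together with the two cases of $\omega_R$ recorded in Section~\ref{section:a:invariant}: when $n\equiv t\bmod 2$ one gets $\Hom_R(\frakp,\omega_R)=R(t-tn)$, and when $n\not\equiv t\bmod 2$ one gets $\Hom_R(\frakp,\omega_R)=\frakp(2t-tn)$. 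In both cases $\omega_{\tilde R}=R(-tn)\oplus\frakp(t-tn)=\tilde R(-tn)$, so $\tilde R$ is Gorenstein with $a(\tilde R)=-tn$. The step requiring the most care is exactly this last one --- keeping straight the divisorial twists, the passage to reflexive hulls, and the parity of $n-t$; the remaining assertions are direct appeals to the results already in hand. (Alternatively, one may run the computation through graded local duality, $\omega_{\tilde R}=\HH\big(H^d_\frakm(\tilde R),R/\frakm\big)$, as is done in Section~\ref{section:a:invariant}.)
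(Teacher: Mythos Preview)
Your proof is correct and follows essentially the same route as the paper: identify $\tilde R$ as the cyclic cover of $R$ with respect to $\frakp$, deduce normality and the singularity properties from Section~\ref{section:cover} and Theorems~\ref{theorem:f:regular}--\ref{theorem:symmetric} together with Boutot, and then compute $\omega_{\tilde R}$ to obtain the Gorenstein property and the $a$-invariant. The only difference is that you compute the canonical module via the change-of-rings formula $\omega_{\tilde R}=\Hom_R(\tilde R,\omega_R)$ after first establishing Cohen--Macaulayness, whereas the paper goes through graded local duality $\omega_{\tilde R}=\HH\big(H^d_\frakm(\tilde R),R/\frakm\big)$ directly; you even note this alternative yourself, and the two computations unwind to the same divisorial bookkeeping.
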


The fact that $\tilde{R}$ has rational singularities in characteristic zero follows from Boutot's theorem~\cite{Boutot}; the $F$-regularity in characteristic $p\ge 3$ follows by combining Theorem~\ref{theorem:f:regular} and Theorem~\ref{theorem:symmetric}. For a different approach using good filtrations, see~\cite[Corollary~2]{Hashimoto:Zeit}.

\begin{remark}
The ring $\tilde{R}$ in Theorem~\ref{theorem:a:inv} has $K$-algebra generators in degree $2$ and degree~$t$; it admits a standard grading in the following two cases:

(i) When $t=1$, index the entries of $Y$ as $y_1,\dots,y_n$. The ring $R\colonequals K[Y^\tr Y]$ is then the second Veronese subring of the polynomial ring $K[Y]$, i.e., the subring generated by the monomials~$y_iy_j$. One has
\[
\frakp\ =\ (y_1^2, y_1y_2,\dots,y_1y_n)R \qquad\text{ and }\qquad \frakp^{(2)}\ =\ (y_1^2)R.
\]
Taking $T\colonequals 1/y_1$, the cyclic cover $\tilde{R}$ coincides with $K[Y]$ under the standard grading.

(ii) When $t=2$, the $K$-algebra generators of $\tilde{R}$ are the entries of $Y^\tr Y$, and the size two minors of $Y$; these generators all have degree two, so the grading on $\tilde{R}$ may be rescaled to a standard grading.
\end{remark}

\begin{remark}
\label{remark:regrade}
When $t$ is even, the ring $\tilde{R}$ in Theorem~\ref{theorem:a:inv} has generators of even degree; rescaling by a factor of two, one obtains generators in degree one (the entries of $Y^\tr Y$) and generators in degree $t/2$ (the maximal minors of $Y$); this is the grading considered in the following section. This is a \emph{semistandard} grading on $\tilde{R}$, i.e., an $\NN$-grading under which the ring is integral over the~$K$-subalgebra generated by its elements of degree one.
\end{remark}

\section{Nonunimodal \texorpdfstring{$h$}{h}-vectors}
\label{section:h:vector}

A description for the Hilbert function of a generic determinantal ring may be found in~\cite{Abhyankar}, while an expression for its Hilbert series is presented in~\cite{Conca:Herzog:hilbert}. In particular, for the numerator of the Hilbert series, known as the \emph{$h$-polynomial}, one has both a combinatorial description (in terms on non-intersection paths with given number of turns) and an explicit compact (and determinantal!) formula. For pfaffian rings, the corresponding results are in~\cite{DeNegri, Ghorpade:Krattenthaler}. For symmetric determinantal rings one finds in \cite{Conca:symmetric:ladders} a combinatorial description of the $h$-polynomial, but no compact determinantal expression for it is known in general. However, for $X$ a symmetric $n\times n$ matrix of indeterminates and $t+1=n-1$, the expression of the $h$-polynomial of $K[X]/I_{t+1}(X)$ is easily obtained to be
\begin{equation}
\label{equation:codim:3}
\binom{2}{2}+\binom{3}{2}z+\dots+\binom{n}{2}z^{n-2},
\end{equation}
see for example \cite[Example~2.3(c)]{Conca:symmetric:ladders}. 

As in Remark~\ref{remark:regrade}, an $\NN$-grading on a ring $A$ is \emph{semistandard} if $A$ is a finitely generated algebra over a field~$K\colonequals A_0$, and $A$ is integral over the~$K$-subalgebra generated by its elements of degree one. This condition ensures that the Hilbert series of $A$ may be written as a rational function
\[
\frac{h_0+h_1z+h_2z^2+\dots+h_kz^k}{(1-z)^{\dim A}},
\qquad\text{ where }h_i \in\ZZ\text{ and }h_k\neq 0.
\]
The coefficients of the numerator, i.e., of the $h$-polynomial, form the \emph{$h$-vector} $(h_0,\dots,h_k)$ of the ring $A$. When $A$ is Cohen-Macaulay, it is readily seen that each $h_i$ is nonnegative; when~$A$ is Gorenstein, the~$h$-vector is a palindrome, i.e., $h_i=h_{k-i}$ for each $0\le i\le k$. In this case, the $h$-vector is said to be \emph{unimodal} if
\[
h_0\le h_1\le\dots\le h_{\lfloor{k/2}\rfloor}.
\]
Unimodality results reflect interesting geometric and combinatorial properties; they figure prominently in Ehrhart theory. Following his proof of the Anand-Dumir-Gupta conjectures regarding the enumeration of magic squares~\cite{Stanley73,Stanley:book}, Stanley asked if the $h$-vector of the corresponding affine semigroup ring is unimodal. This was indeed proven to be the case by Athanasiadis \cite{Athanasiadis}, see also~\cite{Bruns:Roemer}. While Musta\c t\u a and Payne \cite{Mustata:Payne} have constructed examples of Gorenstein normal affine semigroup rings for which the~$h$-vector is not unimodal, these are not standard graded, and the following remains unresolved:

\begin{conjecture}
\label{conjecture:stanley}
The $h$-vector of a standard graded Gorenstein domain is unimodal.
\end{conjecture}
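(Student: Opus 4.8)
The plan is to reduce the conjecture to a Lefschetz-type statement about an Artinian Gorenstein algebra, and then to try to extract such a statement from the domain hypothesis. Let $A$ be a standard graded Gorenstein domain; after enlarging $K$ we may assume it infinite, which changes neither the Gorenstein nor the domain property nor the Hilbert series. Since $A$ is Gorenstein it is Cohen-Macaulay, so a general sequence of $d\colonequals\dim A$ linear forms $\ell_1,\dots,\ell_d$ is a regular sequence, and $\bar A\colonequals A/(\ell_1,\dots,\ell_d)$ is a standard graded Artinian Gorenstein algebra whose Hilbert function is exactly the $h$-vector $(h_0,\dots,h_k)$ of $A$. The one-dimensional socle of $\bar A$ sits in top degree $k$, recovering the palindromic symmetry $h_i=h_{k-i}$.

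Granting this reduction, unimodality would follow from the Weak Lefschetz Property for $\bar A$: the existence of a form $\ell\in[\bar A]_1$ such that every multiplication map $\times\ell\colon[\bar A]_i\to[\bar A]_{i+1}$ has maximal rank. For an algebra with symmetric Hilbert function this forces injectivity below the middle degree and surjectivity above it, hence $h_0\le h_1\le\dots\le h_{\lfloor k/2\rfloor}$. So everything comes down to producing a Lefschetz element on the Artinian reduction of a Gorenstein domain.

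The main obstacle is precisely this final step, and it is why the conjecture is open. For \emph{general} standard graded Artinian Gorenstein algebras, not only does the Weak Lefschetz Property fail, but the Hilbert function itself can be non-unimodal once the codimension is large enough; thus the domain hypothesis on $A$ must be doing essential work. The difficulty is that this hypothesis is destroyed upon passing to $\bar A$, since an Artinian quotient is never a domain, so one cannot invoke irreducibility of $\bar A$ directly and must instead transport some form of positivity from $\operatorname{Proj}A$ to the linear multiplication maps on $\bar A$.

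In characteristic zero the natural vehicle is a Hard Lefschetz argument on $\operatorname{Proj}A$ or on a resolution of it, but such theorems govern the cohomology of line bundles rather than the graded components of the coordinate ring, and they demand smoothness or mild singularities that a general Gorenstein domain need not enjoy. In positive characteristic one might instead hope to manufacture a Lefschetz element from the Frobenius endomorphism when $A$ is, say, $F$-regular, but no general link between Frobenius and the Weak Lefschetz Property is known. A Gr\"obner degeneration of $A$ to a monomial ring preserves the $h$-vector, yet the degeneration need not remain Gorenstein, and unimodality for Gorenstein monomial $h$-vectors is itself delicate, so this merely relocates the problem. I expect that any successful approach will have to identify a genuinely global invariant of the domain $A$ — one not visible in $\bar A$ alone — that forces the existence of a Lefschetz element; the absence of such an invariant is the heart of the matter.
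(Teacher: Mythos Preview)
This statement is labeled \emph{Conjecture} in the paper, and the paper does not prove it; it is Stanley's open conjecture, cited with references to \cite{Stanley:jinan}, \cite{Braun}, \cite{Brenti}, \cite{Bruns:allahabad}, and \cite{Hibi}. The paper's contribution in this section is not a proof but the construction of \emph{semistandard} graded Gorenstein normal domains (the $\SO_t(K)$-invariant rings) whose $h$-vectors are nonunimodal, thereby showing that the standard-graded hypothesis cannot be weakened to semistandard.

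Your write-up is not a proof either, and you say as much: after the correct reduction to an Artinian Gorenstein quotient $\bar A$ and the observation that the Weak Lefschetz Property for $\bar A$ would suffice, you explain why each natural strategy (Hard Lefschetz on $\operatorname{Proj}A$, Frobenius methods, Gr\"obner degeneration) fails to close the gap, and you conclude that the conjecture remains open. That assessment is accurate and matches the state of the art, but it means there is nothing to compare against a ``paper's own proof'': no such proof exists. If your intent was to supply a proof, the genuine gap is exactly the one you identify --- producing a Lefschetz element on $\bar A$ from the domain hypothesis on $A$ --- and nothing in your discussion bridges it. If your intent was instead to explain the status of the problem, then your essay is reasonable, though it should be framed as commentary rather than as a proof proposal.
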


This is due to Stanley~\cite[Conjecture~4(a)]{Stanley:jinan}, see also \cite[Conjecture~1]{Braun}, \cite[Conjecture~5.1]{Brenti}, \cite[page~36]{Bruns:allahabad}, and \cite[Conjecture~1.5]{Hibi}. We show that invariant rings for the action of $\SO_t(K)$ yield examples of ``naturally occurring'' semistandard graded Gorenstein normal domains, for which the $h$-vector is not unimodal:

\begin{theorem}
Consider a $2m\times (2m+2)$ matrix of indeterminates~$Y$ over a field $K$ of characteristic other than two. Let~$\tilde{R}$ denote the $K$-subalgebra of $K[Y]$ generated by the entries of the product matrix $Y^\tr Y$ and the maximal minors of $Y$, where the generators are assigned degree $1$ and degree $m$ respectively. If~$m\ge 2$, the $h$-vector of $\tilde{R}$ is not unimodal.
\end{theorem}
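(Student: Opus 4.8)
The plan is to compute the $h$-polynomial of $\tilde{R}$ in closed form from the module decomposition coming from the cyclic cover, and then exhibit a strict decrease within its first half. Set $t\colonequals 2m$ and $n\colonequals 2m+2$, so that $n\equiv t\bmod 2$, and write $R'$ for $K[Y^\tr Y]\cong K[X]/I_{t+1}(X)$ equipped with the standard grading $\deg x_{ij}=1$ --- equivalently the grading obtained from $K[Y]$ by the rescaling of Remark~\ref{remark:regrade}. Under that rescaling, the identity $\tilde{R}=R\oplus\frakp(t)$ of Section~\ref{section:a:invariant} becomes an isomorphism of $\ZZ$-graded $R'$-modules
\[
\tilde{R}\ \cong\ R'\ \oplus\ \frakp(m),
\]
where $\frakp$ is the height one prime generated by the maximal minors of the first $t$ rows of $Y^\tr Y$; these generators sit in degree $t=2m$ in the standard grading, and the shift by $m$ records that inside $\tilde{R}$ they are traded for the maximal minors of $Y$, of degree $m$. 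Writing $d\colonequals\dim R'=\dim\tilde{R}$, this yields
\[
(1-z)^d H_{\tilde{R}}(z)\ =\ h_{R'}(z)\ +\ z^{-m}\,h_{\frakp}(z),\qquad h_{\frakp}(z)\colonequals(1-z)^d H_{\frakp}(z),
\]
so everything reduces to identifying $h_{R'}$ and $h_{\frakp}$.

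Since $t+1=n-1$, formula~\eqref{equation:codim:3} gives $h_{R'}(z)=\sum_{j=0}^{2m}\binom{j+2}{2}z^j$, of degree $2m$, while $d=\binom{n+1}{2}-\binom{n+1-t}{2}=\binom{2m+3}{2}-3=2m^2+5m$. For $h_{\frakp}$ I would pass through the graded canonical module: Section~\ref{section:a:invariant} gives $\omega_R=\frakp(t-tn)$ in the grading with $\deg x_{ij}=2$, hence $\omega_{R'}=\frakp(c)$ with $c\colonequals\tfrac12(t-tn)=-(2m^2+m)$ in the standard grading, so $c+d=4m$. Because $R'$ is a normal domain, $\Hom_{R'}(\frakp,\frakp)=R'$, and therefore $\Hom_{R'}(\frakp,\omega_{R'})=\Hom_{R'}(\frakp,\frakp)(c)=R'(c)$. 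Now $\frakp$, being a divisorial ideal of the Cohen--Macaulay ring $R'$ --- equivalently, a direct summand of $\tilde{R}$, which is a maximal Cohen--Macaulay $R'$-module --- is itself maximal Cohen--Macaulay, so graded local duality applies and gives $(-1)^d H_{\frakp}(1/z)=z^{-c}H_{R'}(z)$. Unwinding this, $h_{\frakp}(z)=z^{c+d}h_{R'}(1/z)=z^{2m}\sum_{i=0}^{2m}\binom{2m+2-i}{2}z^i$, i.e. $h_{\frakp}$ is $z^{2m}$ times the reverse of $h_{R'}$. Substituting back,
\[
(1-z)^d H_{\tilde{R}}(z)\ =\ \sum_{j=0}^{2m}\binom{j+2}{2}z^j\ +\ \sum_{l=m}^{3m}\binom{3m+2-l}{2}z^l ,
\]
a palindromic polynomial of degree $3m$ with nonnegative coefficients --- consistently with $\tilde{R}$ being Gorenstein with $a(\tilde{R})=-mn$ (Theorem~\ref{theorem:a:inv}, rescaled by $\tfrac12$), which forces the degree to be $a(\tilde{R})+d=3m$ --- and since $\tilde{R}$ is Cohen--Macaulay of dimension $d$ this is its $h$-polynomial. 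Thus the $h$-vector of $\tilde{R}$ is $h_l=\binom{l+2}{2}$ for $0\le l<m$, and $h_l=\binom{l+2}{2}+\binom{3m+2-l}{2}$ for $m\le l\le 2m$, with $h_l=h_{3m-l}$ for $l>2m$.

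It then remains only to locate the failure of unimodality. For $m\ge 1$ both $m$ and $m+1$ lie in the range $[m,2m]$, and a short Pascal's-rule computation gives
\[
h_m-h_{m+1}\ =\ \Bigl(\binom{m+2}{2}-\binom{m+3}{2}\Bigr)+\Bigl(\binom{2m+2}{2}-\binom{2m+1}{2}\Bigr)\ =\ -(m+2)+(2m+1)\ =\ m-1 .
\]
For $m\ge 2$ this is strictly positive, so $h_m>h_{m+1}$; and since $m+1\le\lfloor 3m/2\rfloor$ whenever $m\ge 2$, this descent occurs among $h_0,h_1,\dots,h_{\lfloor 3m/2\rfloor}$, so the chain $h_0\le h_1\le\cdots\le h_{\lfloor 3m/2\rfloor}$ breaks and the $h$-vector of $\tilde{R}$ is not unimodal. (For $m=2$, for instance, one gets $h_{\tilde{R}}(z)=1+3z+21z^2+20z^3+21z^4+3z^5+z^6$.) I do not expect a conceptual obstacle here; the step most prone to error is the careful bookkeeping of degree shifts through the rescaling of Remark~\ref{remark:regrade} and the computation of the Hilbert series of the divisorial ideal $\frakp$, since it is precisely the resulting exponent $2m$ in $h_{\frakp}(z)=z^{2m}\cdot(\text{reverse of }h_{R'})$ that produces the offending pair $h_m>h_{m+1}$.
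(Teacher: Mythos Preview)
Your proof is correct and follows essentially the same approach as the paper: decompose $\tilde{R}$ as $R'\oplus\frakp(m)$, use~\eqref{equation:codim:3} for the $h$-polynomial of $R'$, recover the $h$-polynomial of $\frakp$ from that of $R'$ via duality (the paper phrases this as ``the Hilbert series of $R$ yields that of $\omega_R$''), and then compute $h_m-h_{m+1}=m-1$. Your write-up simply makes explicit the duality step and the bookkeeping of shifts that the paper leaves to the reader.
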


\begin{proof}
Viewing the subring $R\colonequals K[Y^\tr Y]$ as a symmetric determinantal ring and using the expression~\eqref{equation:codim:3}, one see that $R$ has Hilbert series
\[
\frac{\binom{2}{2}+\binom{3}{2}z+\dots+\binom{2m+2}{2}z^{2m}}{(1-z)^{2m^2+5m}}.
\]
The ring $R$ is not Gorenstein; the Hilbert series of $R$ yields that of $\omega_R$, from which it follows that the cyclic cover $\tilde{R}$ has Hilbert series
\[
\frac{\left[\binom{2}{2}+\binom{3}{2}z+\dots+\binom{2m+2}{2}z^{2m}\right] + \left[\binom{2m+2}{2}z^m+\binom{2m+1}{2}z^{m+1}+\dots+\binom{2}{2}z^{3m}\right]}{(1-z)^{2m^2+5m}}.
\]
Hence
\[
h_m-h_{m+1}\ =\ \left[\binom{m+2}{2}+\binom{2m+2}{2}\right]-\left[\binom{m+3}{2}+\binom{2m+1}{2}\right]\ =\ m-1,
\]
so the $h$-vector of $\tilde{R}$ is not unimodal; for a specific example, the case $m=2$ yields the nonunimodal~$h$-vector
\[
(1,\ 3,\ 6,\ 10,\ 15,\ 0,\ 0) + (0, \ 0, \ 15,\ 10,\ 6,\ 3,\ 1)\ =\ (1,\ 3,\ 21,\ 20,\ 21,\ 3,\ 1).
\qedhere\]
\end{proof}


\end{document}